\theoremstyle{plain}
\newtheorem{theo}{Theorem}[section]
\newtheorem*{theo*}{Theorem}
\newtheorem{coro}[theo]{Corollary}
\newtheorem{prop}[theo]{Proposition}
\theoremstyle{definition}
\newtheorem{defi}[theo]{Definition}
\newtheorem{rema}[theo]{Remark}
\newcommand*{\dd}%
  {\relax\ifnum\lastnodetype>0\mskip\medmuskip\fi\mathrm{d}}
\newcommand{\wass}{\operatorname{\mathsf{D}}}
\newcommand{\diam}{\operatorname{diam}}
\newcommand{\leb}{m}
\newcommand{\Hol}{\operatorname{\mathcal{H}}}
\newcommand{\holc}{\operatorname{Hol}}
\newcommand{\one}{\boldsymbol{1}}
\title{Mixing speed and stability of SRB measures through optimal transportation}
\author{Houssam Boukhecham \thanks{Univ Paris Est Creteil, Univ Gustave Eiffel, CNRS, LAMA UMR8050, F-94010 Creteil, France} \and Beno\^{\i}t R. Kloeckner \footnotemark[1] }
\begin{document}

\maketitle

\begin{abstract}
It is well-known that the SRB measure of a $C^{1+\alpha}$ Anosov diffeomorphism has exponential decay of correlations with respect to Hölder-continuous observables. We propose a new approach to this phenomenon, based on optimal transport. More precisely, we define a space of measures having absolutely continuous disintegrations with respect to some foliation close to the unstable foliation of the map, endowed with a variant of the Wasserstein metric where mass is only allowed to be transported along the diffeomorphism's stable foliation. We show that this metric is indeed finite on that space, and use that the construction makes the diffeomorphism act as a contraction to deduce two corollaries. First, the SRB measure has exponential decay of correlation with respect to pairs of observable that are only asked to be Hölder-continuous \emph{in the stable, respectively unstable direction}, but can be discontinuous overall. Then, we prove quantitative statistical stability: the map sending a $C^{1+\alpha}$ Anosov diffeomorphism to its SRB measure is locally Hölder-continuous (using the $C^1$ metric for diffeomorphisms and the usual Wasserstein metric for measures).
\end{abstract}

\begin{center}\begin{minipage}{.8\textwidth}\textit{
\textbf{Disclaimer.} This article is an advanced draft: some proofs are somewhat sketchy and the results should therefore be taken with a pinch of salt. Both authors have projects outside academia for the near future, and time did not permit to provide a more complete write-up. We still hope the idea is fruitfull and the proofs detailed enough that the method can be used by others in many cases beyond Anosov diffeomophism.
}\end{minipage}\end{center}

\section{Introduction}

Let $T:M\to M$ be a $C^{1+\alpha}$, topologically mixing Anosov diffeomorphism. The chaotic properties of $T$ make it practically impossible to predict the future of an orbit at any given time, and at the same time enable one to predict accurately how such an orbit distributes over a long enough period of time, for Lebesgue-almost all starting point. This distribution is indeed well-known to be described by the \emph{SRB measure} of $T$, which is also a \emph{physical measure}. The ``accuracy'' of the distribution of orbits is quantified by the decay of correlations, which is exponential for Hölder observables. An important contemporary line of research consists in extending this to various classes of systems enjoying some flavor of non-uniformly hyperbolicity, see e.g. \cite{Liverani1995decay, Baladi1996stochastic, Dolgopyat1998flows, Young1998statistical, Benedicks2000Henon, deCastro2004attractor, Varandas2008correlation, Alves2012statistical, DeSimoi2016mostly, Korepanov2019coupling}.
 The purpose of the present article is to propose a novel method to study the decay of correlations, yielding some new results in the case of Anosov diffeomorphisms and hopefully simple enough to be extended in other settings.

The method is by a coupling argument, akin to the one commonly used in finite-states Markov Chains where one creates two realizations of the chain that are coupled one to another: they run independently until they meet at the same state, then they evolve along the same trajectory. Our idea is to do the same for orbits of $T$, with ``meeting at the same state'' replaced by ''lying not too far away on the same stable leaf''. 

The statement of our main result needs a few definitions which we introduce first.

\subsection{Wasserstein metric and variants}

To state our central result, let us first introduce the \emph{Wasserstein metric} $\wass_1$ associated to the distance function $d(\cdot,\cdot)$ defined on $M$ by any Riemannian metric : to any two probability measures $\mu_1,\mu_2$, it associates the distance
\[ \wass^1(\mu_1,\mu_2) := \inf_{\gamma\in\Gamma(\mu_1,\mu_2)} \int_{M\times M} d(x,y) \dd \gamma(x,y)\]
where $\Gamma(\mu_1,\mu_2)$ is the set of \emph{coupling} (aka \emph{transport plans}) between $\mu_1$ and $\mu_2$, i.e. the set of probability measures $\gamma$ on $M\times M$ having $\mu_1,\mu_2$ as marginals. This last condition can be written $p_{i*}\gamma = \mu_i$ for both $i=1,2$, where $p_i:M\times M\to M$ is the projection to the $i$th factor. ``Kantorovich's duality'' enables one to rewrite this as
\[ \wass^1(\mu_1,\mu_2) = \sup_{f\in\operatorname{Lip}(d,1)} \Big\lvert\int f\dd\mu_1 - \int f\dd \mu_2 \Big\rvert\]
where $\operatorname{Lip}(d,k)$ denotes the space of functions $M\to\mathbb{R}$ that are Lipschitz of constant at most $k$ with respect to the metric $d$.
This metric induces the weak-$*$ topology on the set of probability measure (this claim uses the compactness of $M$).

Now, the same concept can be used for other metrics than $d$. We will combine two variations: first, using $d(\cdot,\cdot)^\beta$ for some $\beta\in(0,1)$, which is also a metric. The corresponding Wasserstein metric is
\[ \wass^\beta(\mu_1,\mu_2) := \inf_{\gamma\in\Gamma(\mu_1,\mu_2)} \int_{M\times M} d(x,y)^\beta \dd \gamma(x,y),\]
it still metrizes the weak-$*$ topology, and the duality will then be against $\beta$-Hölder functions with Hölder constant at most $1$ (it is quite different from $\wass^1(\cdot,\cdot)^\beta$, since small movements of mass are disproportionately penalized compared to large ones). Second, we will use the extended metric $d_s:M\times M\to[0,+\infty]$ which gives the distance along the stable leafs: $d_s(x,y)=\infty$ when $x,y$ are not in the same stable leaf, otherwise $d_s(x,y)$ is the least length of a smooth curve from $x$ to $y$, constrained to stay on the same stable leaf. Then we write
\[ \wass_s^1(\mu_1,\mu_2) := \inf_{\gamma\in\Gamma(\mu_1,\mu_2)} \int_{M\times M} d_s(x,y)\dd \gamma(x,y)\]
which can take the value $+\infty$ but is otherwise well-defined, and has a dual formulation against bounded Borel-measurable function that are Lipschitz \emph{in the stable direction}, i.e. with respect to $d_s$. Last, we combine both variations by introducing for all $\beta\in(0,1]$
\[ \wass_s^\beta(\mu_1,\mu_2) := \inf_{\gamma\in\Gamma(\mu_1,\mu_2)} \int_{M\times M} d_s(x,y)^\beta\dd \gamma(x,y).\]
Again, we have a duality and we introduce the corresponding space $\holc_s^\beta(M,T)$ of bounded, Borel-measurable functions $f:M\to\mathbb{R}$ such that for some $C\ge 0$ and all $x,y\in M$,
\[\lvert f(x) - f(y)\rvert \le C\cdot d_s(x,y)^\beta.\]
The least such $C$ is then denoted by $\holc_s^\beta(f)$, and we use on $\holc_s^\beta(M,T)$ the norm $\lVert\cdot\rVert_{s,\beta} = \lVert\cdot\rVert_\infty + \holc_s^\beta(\cdot)$, making it a Banach algebra.
Then
\[ \wass_s^\beta(\mu_1,\mu_2) = \sup_{f} \Big\lvert\int f\dd\mu_1 - \int f\dd \mu_2 \Big\rvert\]
where the supremum is over all $f\in\holc_s^\beta(M,T)$ such that $\holc_s^\beta(f)\le 1$.

We define similarly $\holc_W^\beta$, where the stable foliation is replaced by any foliation $W$, and $\holc_u^\beta:=\holc_{W^u}^\beta$.

\subsection{Regularly Foliated Measures}

The notion of SRB measure combines two aspects: the absolute continuity of the local disintegrations along the unstable foliation, and the $T$-invariance (the usual definition also includes existence of a positive Lyapunov exponent, which follows here from the Anosov assumption). We will consider non-invariant measures that satisfy the first condition for a foliation close to the unstable foliation of $T$.

To define ``close'', we fix an open cone field $(C_x \subset T_xM)_{x\in M}$ containing a neighborhood of the unstable distribution $E^u$ of $T$~; we will denote by $U_x$ the corresponding neighborhood of $E^u_x$ in the Grassmanian $G(k_u,T_xM)$ where $k_u := \dim E^u$; i.e. whenever $E$ is a $k_u$-dimensional subspace of $T_xM$, we write indifferently $E\in U_x$ or $E\subset C_x$, and we say that $E$ is \emph{tangent to the cone field}. A $k_u$-dimensional submanifold $N$ of $M$ is said to be \emph{tangent to the cone field} when $T_xN\in U_x$ for all $x\in N$, and a foliation is said to be tangent to the cone field whenever its leaves are. The cone field will be chosen small enough that the action of $T$ on the sections of $(U_x)_x$ is contracting (with $E^u$ as its unique fixed point).

We use two parameters $\beta\in(0,1]$ and $K>0$ to bound the regularity of both the foliation and the densities of the disintegration. Foliations to be considered will be tangent to the cone field, and have their leaves written as $\beta$-Hölder graphs with constant at most $K$ in some fixed finite atlas of $M$. Measures to be considered will have absolutely continuous local disintegrations with respect to such a foliation, with $\beta$-Hölder densities, and the log-densities will be asked to have Hölder constant at most $K$.
The set of such probability measures is denoted by $\mathscr{R}_K^\beta$ (the precise definition is given as Section \ref{s:defiR}).

\subsection{Results}

We are now in a position to state our central result.
\begin{theo}\label{t:central}
Let $T:M\to M$ be a $C^{1+\alpha}$ Anosov diffeomorphism. For some $\beta_0\in(0,1)$ and for each $\beta\in(0,\beta_0)$ there exist $K_0,C>0$ and $n_0\in\mathbb{N}$ with the following properties:
\begin{enumerate}
\item $\mathscr{R}_{K_0}^\beta$ contains a unique $T$-invariant measure, which is the SRB measure $\mu_0$ of $T$,
\item $\wass_s^{\beta}(\mu_1,\mu_2) \le C$ for all $\mu_1,\mu_2\in \mathscr{R}_{K_0}^\beta$,
\item $T^{n_0}_*\mathscr{R}_{2K}^\beta\subset \mathscr{R}_{K}^\beta$  for all $K\ge K_0$.
\end{enumerate}
\end{theo}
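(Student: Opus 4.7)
The plan is to prove the three items in the order (3), (1), (2): item (3) carries all the dynamical content, while items (1) and (2) will follow by combining it with classical Anosov theory (for (1)) and with a finite-cover geometric argument (for (2)).

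\emph{Item (3).} I choose the cone field $(C_x)_x$ so that the induced action of $T$ on Grassmannian sections tangent to it contracts with ratio $\lambda_c<1$, and let $\lambda_u>1$ be the infimum of the unstable expansion. For a foliation $W$ tangent to the cone field, $T_*W$ is again tangent, and in a fixed finite atlas its leaves remain $\beta$-Hölder graphs. The $C^{1+\alpha}$ regularity of $T$ together with the cone contraction yield an affine recursion
\[ K_{n+1} \le \lambda_c^\beta\, K_n + B_1 \]
on the leaf-graph Hölder constants $K_n := K(T_*^n W)$. Simultaneously the density $\rho_n$ of the disintegration of $T^n_*\mu$ along $T_*^n W$ transforms by multiplication by an unstable Jacobian composed with $T^{-1}$: this Jacobian is $\beta$-Hölder along leaves with Hölder constant bounded in terms of $K_n$, and composition with $T^{-1}$ contracts leaf-Hölder seminorms by a factor $\lambda_u^{-\beta}$, leading to a second affine recursion
\[ H_{n+1} \le \lambda_u^{-\beta}\, H_n + B_2 \]
for the leaf-Hölder constant $H_n$ of $\log\rho_n$, with $B_2$ bounded as long as $K_n$ is. Fixing $\beta_0$ small so that both contraction factors are $<1$, then $K_0$ large enough to absorb the two fixed points, and $n_0$ large enough that both recursions carry $[0,2K]$ into $[0,K]$, yields (3).

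\emph{Items (1) and (2).} For (1), classical Anosov theory (Hölder regularity of $W^u$; Hölder densities of unstable conditionals of the SRB measure) places $\mu_0\in\mathscr{R}_{K_0}^\beta$ for $\beta_0$ small and $K_0$ possibly enlarged (harmless, since (3) holds for all $K\ge K_0$). Uniqueness: any $T$-invariant $\mu\in\mathscr{R}_{K_0}^\beta$ has absolutely continuous conditionals along some foliation $W$ tangent to the cone field with $T_*W=W$; the contraction established in (3) forces $T_*^n W\to W^u$ and hence $W=W^u$, after which classical uniqueness of the SRB measure among measures with a.c. unstable conditionals gives $\mu=\mu_0$. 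For (2), cover $M$ by finitely many charts of local product structure with small, uniform diameter; in each chart stable-holonomy transport between the foliation supporting any $\mu\in\mathscr{R}_{K_0}^\beta$ and $W^u$ is Hölder with a bounded constant (both foliations lie close to $W^u$), and the cost of moving mass along stable holonomies in a chart is bounded by its stable diameter. Given $\mu_1,\mu_2$, a local coupling is built by matching unstable plaques via stable holonomies; a partition-of-unity stitching produces a global coupling of uniformly bounded $\wass_s^\beta$-cost.

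\emph{Main obstacle.} The substance of the proof lies in item (3): the regularities of the foliation and of the density feed into each other's recursion (the density constant $B_2$ depends on $K_n$), so one must choose $\beta_0$, $K_0$, and $n_0$ making both recursions simultaneously contracting while keeping $B_1$ and $B_2$ uniform over the relevant range of $K$. This coupled bookkeeping, together with the leaf-wise distortion estimates supplied by the $C^{1+\alpha}$ regularity, constitutes the main technical work.
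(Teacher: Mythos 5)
Your treatment of item (3) matches the paper's: the paper establishes $T_*\mathscr{R}_K^\beta\subset\mathscr{R}_{\lambda'(K+H)}^\beta$ and runs exactly the affine recursion you describe (it is the same computation as in the expanding-map warm-up). But the heart of the theorem is item (2), and there your argument has a genuine gap. A coupling with finite $\wass_s^\beta$-cost must be supported on pairs $(x,y)$ lying on a \emph{common stable leaf} (otherwise $d_s=\infty$), so ``matching unstable plaques via stable holonomies in each chart and stitching with a partition of unity'' cannot produce a coupling of the full mass: inside a chart, the stable holonomy transports the conditional density of $\mu_1$ on a $W^1$-plaque to a density on a $W^2$-plaque that need not dominate (nor be dominated by) the conditional density of $\mu_2$ there, and the transverse masses of $\mu_1$ and $\mu_2$ over the charts need not agree either. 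One can only couple a definite \emph{fraction} $\tau$ of the mass this way. The paper's proof is built around exactly this obstruction: it extracts a $\tau$-fraction partial coupling supported on $\Delta_s(L_0)$ (item (v) of Proposition \ref{p:technical}, which uses density of stable leaves to connect distinct charts by stable paths of length $\le L_0$), pushes the residual measures forward by $T^{n_0}$ to restore membership in $\mathscr{R}_{K_0}^\beta$, iterates, and assembles $\gamma=\tau\sum_j(1-\tau)^j(T^{-jn_0},T^{-jn_0})_*\eta_j$. The cost of the $j$-th piece grows like $\lambda_0^{-j\beta n_0}L_0^\beta$ because $T^{-jn_0}$ expands stable distances, and the series converges only when $(1-\tau)\lambda_0^{-\beta n_0}<1$.

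This also explains a second symptom of the gap: your stated reason for restricting $\beta$ (``both contraction factors $<1$'') holds for \emph{every} $\beta\in(0,\alpha_0]$, since $\lambda_c^\beta<1$ and $\lambda_u^{-\beta}<1$ automatically; so your argument, if it worked, would give item (2) for all such $\beta$. The genuine origin of $\beta_0$ is the competition between the geometric decay $(1-\tau)^j$ of uncoupled mass and the backward stable expansion $\lambda_0^{-j\beta n_0}$, i.e.\ $\beta_0=\min\{\alpha_0,\log(1-\tau)/(n_0\log\lambda_0)\}$. Finally, in item (1) your uniqueness argument asserts $T_*W=W$ for the foliation witnessing $\mu\in\mathscr{R}_{K_0}^\beta$, which does not follow from invariance of $\mu$ alone; the paper instead deduces uniqueness directly from the contraction in $\wass_s^\beta$ (two invariant measures in $\mathscr{R}_{K_0}^\beta$ are at finite distance which must decay under iteration, hence vanish). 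Your route could likely be patched via $T^n_*W\to W^u$ and classical SRB uniqueness, but the missing coupling mechanism in item (2) is the essential defect.
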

In particular,  for all $\mu_1,\mu_2\in \mathscr{R}_{K_0}^\beta$ we can pair $\mu_1$ and $\mu_2$ by a couling $\gamma$ that is supported on the set of $(x,y)$ where $x$ and $y$ are in the same stable leaf. Since the stable direction is contracting exponentially, denoting by $\lambda\in(0,1)$ any constant such that $T$ is $\lambda$-contracting along its stable foliation $W^s$, we get
\begin{equation}
\wass_s^{\beta}(T^n_*\mu_1,T^n_*\mu_2) \le C\lambda^{\beta n} \qquad \forall n\in\mathbb{N};
\label{eq:decay}
\end{equation}
and since $\wass^1$ is bounded above by $\diam(M)^{1-\beta} \wass_s^{\beta}$, we also have exponential convergence in the usual Wasserstein metric. Given any $\mu\in \mathscr{R}_K^\beta$, the sequence $(T^n_*\mu)$ will converge to an invariant measure in $\mathscr{R}^\beta_{K_0}$, which must then be the SRB measure of $T$.

The following two corollaries are easily deduced from Theorem \ref{t:central}.
\begin{coro}[Exponential decay of correlations for stable/unstable-Hölder pairs of observables]\label{c:decay}
Let $\mu_0$ be the SRB measure of $T$. For all $\beta\le \beta_0$, there exist $C_\beta\ge 1$ such that for all $n\in\mathbb{N}$, all $f\in \holc_s^\beta$ and all $g\in \holc_u^\beta$,
\[\Big\lvert \int f\circ T^n \cdot g \dd\mu_0 -\int f\dd\mu_0\int g\dd\mu_0 \Big\rvert \le C_\beta\,  \lVert f\rVert_{s,\beta} \,  \lVert g\rVert_{u,\beta} \, \lambda^{\beta n} \]
\end{coro}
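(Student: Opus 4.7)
The plan is to reduce the correlation to an application of \eqref{eq:decay} combined with Kantorovich-Rubinstein duality. By the $T$-invariance of $\mu_0$, the correlation is unchanged if we replace $g$ by $\tilde g := g + A$ for any constant $A$:
\[\int f\circ T^n \cdot g\dd\mu_0 - \int f\dd\mu_0\int g\dd\mu_0 = \int f\circ T^n \cdot \tilde g\dd\mu_0 - \int f\dd\mu_0\int \tilde g\dd\mu_0.\]
Choosing $A > \lVert g\rVert_\infty$, the function $\tilde g$ is positive; setting $\tilde c := \int \tilde g\dd\mu_0$ and $\nu_g := \tilde c^{-1}\tilde g\mu_0$, the right-hand side rewrites as
\[\tilde c\Big(\int f\dd T^n_*\nu_g - \int f\dd T^n_*\mu_0\Big).\]

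Next I would check that $\nu_g \in \mathscr{R}_{2K_0}^\beta$. By item \textit{(i)} of Theorem \ref{t:central}, $\mu_0 \in \mathscr{R}_{K_0}^\beta$ has absolutely continuous disintegrations along $W^u$ whose log-densities have Hölder constant at most $K_0$ on each leaf; those of $\nu_g$ differ by $\log\tilde g - \log\tilde c$. Since $g\in\holc_u^\beta$ and $\tilde g\ge A-\lVert g\rVert_\infty$, the elementary bound $\lvert\log a-\log b\rvert \le \lvert a-b\rvert/\min(a,b)$ gives the Hölder constant of $\log\tilde g$ on each unstable leaf at most $\holc_u^\beta(g)/(A-\lVert g\rVert_\infty)$. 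Taking $A$ of order $\lVert g\rVert_{u,\beta}$ makes this $\le K_0$, so that $\nu_g\in\mathscr{R}_{2K_0}^\beta$; moreover $\tilde c\le \lVert g\rVert_\infty + A = O(\lVert g\rVert_{u,\beta})$. Item \textit{(iii)} applied once then gives $T^{n_0}_*\nu_g\in\mathscr{R}_{K_0}^\beta$.

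For $n\ge n_0$, applying \eqref{eq:decay} to the pair $T^{n_0}_*\nu_g, T^{n_0}_*\mu_0 \in \mathscr{R}_{K_0}^\beta$ yields $\wass_s^\beta(T^n_*\nu_g, T^n_*\mu_0) \le C\lambda^{\beta(n-n_0)}$; Kantorovich-Rubinstein duality of $\wass_s^\beta$ against $\holc_s^\beta$ then gives
\[\Big\lvert\int f\dd T^n_*\nu_g - \int f\dd T^n_*\mu_0\Big\rvert \le \holc_s^\beta(f)\cdot C\lambda^{\beta(n-n_0)}.\]
Combined with $\tilde c = O(\lVert g\rVert_{u,\beta})$ and $\holc_s^\beta(f)\le\lVert f\rVert_{s,\beta}$, this gives the desired estimate for $n\ge n_0$ after absorbing $\lambda^{-\beta n_0}$ into $C_\beta$. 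For $n<n_0$ the trivial bound $2\lVert f\rVert_\infty\lVert g\rVert_\infty \le 2\lVert f\rVert_{s,\beta}\lVert g\rVert_{u,\beta}$ combined with $\lambda^{\beta n}\ge\lambda^{\beta n_0}$ completes the estimate. The main technical point I expect is the quantitative bookkeeping of the log-Hölder constant of the leafwise density of $\tilde g\mu_0/\tilde c$: this is what forces $A$ to scale with $\lVert g\rVert_{u,\beta}$ and dictates how this norm enters the final bound.
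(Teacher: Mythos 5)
Your proposal is correct and follows essentially the same route as the paper's proof: both turn $g$ into a positive, $\holc_u^\beta$ density with respect to $\mu_0$ by an affine renormalization scaled by $\lVert g\rVert_{u,\beta}$ (the paper uses $\rho=a\lVert g\rVert_{u,\beta}^{-1}g+b$, you use $(g+A)/\tilde c$), place the perturbed measure in $\mathscr{R}_{2K_0}^\beta$ via the density-multiplication property, push forward $n_0$ times to return to $\mathscr{R}_{K_0}^\beta$, and conclude by the exponential contraction \eqref{eq:decay} together with duality of $\wass_s^\beta$ against $f\in\holc_s^\beta$. Your explicit handling of $n<n_0$ and of the log-Hölder bookkeeping is, if anything, slightly more careful than the paper's.
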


While exponential decay of correlations is long known for Anosov diffeomorphisms, this version applies to observables with only partial continuity requirement: $f$, $g$ can exhibit discontinuities in the unstable, respectively stable, direction; the duality appearing between $\holc_u^\beta$ and $\holc_s^\beta$ seems very natural. One could also point out that the rate of decay is directly determined by the contraction factor $\lambda$ of $T$, but beware that $\beta_0$, while constructive, is not made completely explicit in the proof and can in principle be very small.

\begin{coro}[Hölder statistical stability]\label{c:stability}
There exist $\varepsilon>0$, $C'$ and $\beta'$ such that for any $C^{1,\alpha}$ Anosov map $T_1:M\to M$ which is $\varepsilon$-close from $T$ in the $C^1$ topology,
the SRB measures $\mu_0,\mu_1$ of $T$ and $T_1$ are close in the Wasserstein distances:
\[ \wass_1(\mu_0,\mu_1) \le C' \lVert T-T_1\rVert_{C^0}^{\beta'}\]
\end{coro}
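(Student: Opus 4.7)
\noindent\textbf{Plan of proof for Corollary \ref{c:stability}.} The strategy combines three ingredients: the exponential contraction given by Theorem \ref{t:central}, a shadowing-type bound between the iterates of $T$ and $T_1$, and the fact that both SRB measures lie in a common class $\mathscr{R}_{K_0}^\beta$ associated to the reference map $T$.

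The first step is to establish that, when $T_1$ is close enough to $T$ in the $C^1$ topology, the SRB measure $\mu_1$ of $T_1$ belongs to $\mathscr{R}_{2K_0}^\beta$ (with respect to the cone field and atlas fixed for $T$). The unstable distribution $E^u_{T_1}$ depends continuously on $T_1$, so the leaves of $W^u_{T_1}$ are tangent to the cone field of $T$ and their $\beta$-Hölder graph constants remain uniformly bounded; similarly, the classical regularity theory for SRB measures gives uniform $\beta$-Hölder control of the disintegration densities of $\mu_1$ along $W^u_{T_1}$. Applying the third item of Theorem \ref{t:central} once then places $T^{n_0}_*\mu_1\in\mathscr{R}_{K_0}^\beta$, and further iterations remain in that class.

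Writing $\delta := \lVert T - T_1\rVert_{C^0}$ and $L$ for a Lipschitz constant of $T$, and using $\mu_0 = T^n_*\mu_0$ together with $\mu_1 = (T_1^n)_*\mu_1$, the triangle inequality yields for any $n\ge n_0$
\[
\wass^1(\mu_0,\mu_1) \le \wass^1\bigl(T^n_*\mu_0, T^n_*\mu_1\bigr) + \wass^1\bigl(T^n_*\mu_1,(T_1^n)_*\mu_1\bigr).
\]
The first term is controlled by $\diam(M)^{1-\beta}\,\wass_s^\beta(\mu_0, T^n_*\mu_1)$ (using $d\le d_s$ and $d\le \diam(M)^{1-\beta}d^\beta$) and then by \eqref{eq:decay}, giving $O(\lambda^{\beta n})$. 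The second term is controlled by the push-coupling $(T^n,T_1^n)_*\mu_1$ together with the elementary inductive estimate $d(T^n x, T_1^n x) \le \delta\,(L^n-1)/(L-1)$, yielding $O(\delta L^n)$. Optimizing $n$ so as to balance $\lambda^{\beta n}$ and $\delta L^n$ gives $n\sim \log(1/\delta)/\log(L\lambda^{-\beta})$ and hence the announced Hölder bound, with the explicit exponent $\beta' = \beta\log(1/\lambda)\big/\bigl(\beta\log(1/\lambda) + \log L\bigr)\in(0,1)$.

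The main obstacle is really the first step: one must quantitatively ensure that the $\beta$-Hölder constants of both $W^u_{T_1}$ and the disintegration densities of $\mu_1$ stay uniformly bounded under $C^1$-perturbation, so that $\mu_1$ (or a fixed iterate of it) lies in a class $\mathscr{R}_K^\beta$ with $K$ independent of $T_1$ in a $C^1$-neighborhood of $T$; this is where the structural-stability and smooth-ergodic-theory inputs enter, and where most of the work lies. The remainder is a standard decay-plus-shadowing argument, and it is noteworthy that $\beta'$ depends only on the reference map $T$ (through $\beta$, $\lambda$, $L$) and not on the perturbation $T_1$.
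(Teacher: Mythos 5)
Your proposal is correct and follows essentially the same route as the paper: place $\mu_1$ (or a fixed iterate of it) in the class $\mathscr{R}_{K_0}^\beta$ attached to $T$ via $C^1$-robustness of the constants in Proposition \ref{p:technical}, split $\wass^1(\mu_0,\mu_1)$ by the triangle inequality into a contraction term controlled by \eqref{eq:decay} and a shadowing term $\lVert T^n-T_1^n\rVert_{C^0}\lesssim A^n\lVert T-T_1\rVert_{C^0}$, and optimize over $n$. The only cosmetic difference is that you balance the two terms explicitly in $\delta$ (yielding an explicit $\beta'$), whereas the paper chooses $n\simeq\log(1/D)$ and solves the resulting implicit inequality.
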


Here the main emphasis is on the explicit, Hölder modulus of continuity of the map sending an Anosov diffeomorphism to its SRB measure. Also note that the distance between measure is given in the quite strong Wasserstein metric, and that while we need $T_1$ to be $C^1$-close to $T$, the control we get uses only the uniform distance
\[\lVert T-T_1\rVert_{C^0} := \max_{x\in M} d\big(T(x), T_1(x) \big).\]

\section{Warm-up: expanding maps}\label{s:expanding}

Our goal here is to illustrate one strand of our method in the simplest possible setting by providing a short and elementary proof of the exponential decay of correlation for expanding maps. 

The setting is as follows: $T:\mathbb{T}^1\to\mathbb{T}^1$ is a uniformly expanding circle map of class $C^{1+\alpha}$ for some $\alpha\in(0,1]$. Identify $\mathbb{T}^1=\mathbb{R}/\mathbb{Z}$ with $[0,1)$ and denote the usual circle distance by $d$. Since $T$ is expanding, $\lambda:=1/\inf\lvert T'\rvert \in (0,1)$. Since $T$ is $C^{1+\alpha}$, there exist $H>0$ such that $\lvert T'(x)\rvert \le \lvert T'(y)\rvert e^{Hd(x,y)^\alpha}$ for all $x,y\in\mathbb{T}^1$. Let $\leb$ be the Lebesgue measure; searching for an absolutely continuous invariant probability measure (ACIP), one as usual defines the transfer operator $\mathcal{L}$ by $T_*(\rho \leb) = \mathcal{L}(\rho) m$, so that a non-negative eigenfunction of $\mathcal{L}$ for the eigenvalue $1$ is the density of an ACIP. By the change of variable formula, the transfer operator can be expressed as
\[\mathcal{L}\rho(x) := \sum_{z\in T^{-1}(x)} \frac{\rho(z)}{\lvert T'(z) \rvert}\]
and given any $x,y\in \mathbb{T}^1$, we can order $T^{-1}(x)=:\{x_i:1\le i\le k\}$ and $T^{-1}(y)=:\{y_i:1\le i\le k\}$ (where $k=\lvert\deg T\rvert$) in such a way that $d(x_i,y_i)\le \lambda d(x,y)$ for all $i$. Such an ordering will always be assumed whenever we consider a pair of points and their inverse images. It will be convenient to have $\mathcal{L}$ act on $\alpha$-Hölder function, i.e. we view it as a linear, bounded operator on the Banach algebra $\holc_\alpha(\mathbb{T}^1)$ where the norm is
\[\lVert f\rVert_\alpha := \lVert f\rVert_\infty + \holc_\alpha(f); \qquad \holc_\alpha(f) := \sup \Big\{\frac{\lvert f(x)-f(y)\rvert}{d(x,y)^\alpha} \colon x\neq y\in \mathbb{T}^1 \Big\}.\]

\begin{theo}[Folklore\footnote{By this we mean that there are so many variants of such a result that we prefer not attribute it to a particular article or set of authors.}]\label{t:expanding}
There is an $\alpha$-Hölder positive density $\rho_0:\mathbb{T}^1\to(0,+\infty)$ such that $\rho_0 m$ is $T$-invariant, and there exist $C\ge 1$ and $\theta\in(0,1)$ such that for all $\alpha$-Hölder positive density $\rho$ and all $n\in\mathbb{N}$,
\[\lVert \mathcal{L}^n \rho -\rho_0 \rVert_\alpha \le C \lVert \rho-\rho_0\rVert_\alpha \theta^n.\]
\end{theo}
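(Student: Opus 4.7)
The plan is to adapt the cone/Hilbert-metric philosophy underlying Theorem~\ref{t:central}, which in this one-dimensional expanding setting becomes particularly clean since the ``unstable foliation'' is all of $\mathbb{T}^1$ and the required contraction is provided directly by the inverse branches of $T$.

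\emph{Cone invariance.} I would first introduce the analog of $\mathscr{R}_K^\beta$, namely
\[\mathcal{R}_K := \Big\{\rho : \rho>0,\ \int\rho\,dm = 1,\ \rho(x)\le \rho(y)\,e^{K d(x,y)^\alpha}\text{ for all } x,y\Big\},\]
the set of probability densities whose logarithm is $\alpha$-Hölder with constant at most $K$. Pairing preimages $T^{-1}(x)=\{x_i\}$, $T^{-1}(y)=\{y_i\}$ so that $d(x_i,y_i)\le\lambda d(x,y)$, the log-Hölder bound on $\rho$ and the distortion bound on $\log\lvert T'\rvert$ give, summand by summand,
\[\bigl\lvert\log\bigl(\rho(x_i)/\lvert T'(x_i)\rvert\bigr) - \log\bigl(\rho(y_i)/\lvert T'(y_i)\rvert\bigr)\bigr\rvert \le (K+H)\,\lambda^\alpha\, d(x,y)^\alpha.\]
Since the ratio of two positive-weighted sums always lies between the extremal ratios of the summands, the same bound passes to $\log\mathcal{L}\rho$, so $\mathcal{L}(\mathcal{R}_K)\subset\mathcal{R}_{\lambda^\alpha(K+H)}$. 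For $K_0 > H\lambda^\alpha/(1-\lambda^\alpha)$ the cone $\mathcal{R}_{K_0}$ is $\mathcal{L}$-invariant and $\mathcal{L}$ sends it into a strictly smaller sub-cone $\mathcal{R}_{K_*}$ with $K_*<K_0$.

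\emph{Existence of $\rho_0$ and $C^0$ contraction.} The set $\mathcal{R}_{K_0}$ is convex and $C^0$-compact by Arzelà--Ascoli (the uniform Hölder seminorm and normalized integral give uniform two-sided bounds and equicontinuity), so Schauder's theorem yields a fixed point $\rho_0\in\mathcal{R}_{K_0}$, which is the unique invariant density by standard ergodicity arguments. For the exponential contraction, Birkhoff's theorem applied to the cone $\mathcal{R}_{K_0}$ with its Hilbert projective metric gives a uniform contraction factor $\theta_0\in(0,1)$, since $\mathcal{L}(\mathcal{R}_{K_0})\subset\mathcal{R}_{K_*}$ has finite Hilbert-diameter inside $\mathcal{R}_{K_0}$. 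On $\mathcal{R}_{K_0}$ densities are uniformly bounded above and below, so the Hilbert metric is comparable to the log-$C^0$ distance, yielding exponential $C^0$-decay of $\mathcal{L}^n\rho$ to $\rho_0$.

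\emph{Upgrade to the Hölder norm.} To reach the $\lVert\cdot\rVert_\alpha$ statement, I would combine the $C^0$-decay with a Lasota--Yorke inequality
\[\holc_\alpha(\mathcal{L} f)\le C_1\lambda^\alpha \holc_\alpha(f) + C_2\lVert f\rVert_\infty,\]
obtained from the same preimage-pairing and distortion calculation applied to signed $f$. Iterating this on $f = \mathcal{L}^{n-k}\rho - \rho_0$ with $k$ a well-chosen fraction of $n$ interpolates the $C^0$-decay against the uniform Hölder control inherited from $\mathcal{R}_{K_0}$, producing geometric decay of $\holc_\alpha(\mathcal{L}^n\rho-\rho_0)$ and hence of $\lVert\mathcal{L}^n\rho-\rho_0\rVert_\alpha$. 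The main obstacle I anticipate is bookkeeping: ensuring that the Birkhoff rate $\theta_0$ and the Lasota--Yorke coefficients combine into a single explicit $\theta\in(0,1)$. If $C_1\lambda^\alpha\ge 1$, this may require running the whole argument for a large iterate $T^N$ (whose contraction factor $\lambda^N$ is arbitrarily small) rather than for $T$ itself.
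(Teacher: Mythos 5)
Your proposal is correct, and its first half (the cone invariance $\mathcal{L}(\mathcal{R}_K)\subset\mathcal{R}_{\lambda^\alpha(K+H)}$ via preimage pairing and the extremal-ratio inequality) is exactly the paper's first step, with the same sets $\Hol^\alpha_K$ and the same constants. Where you genuinely diverge is in the contraction mechanism. The paper stays entirely elementary: since every $\rho\in\Hol^\alpha_{K_0}$ is bounded below by $e^{-K_0}$, it splits off a fixed multiple of the constant density, $\rho=\tau\one+(1-\tau)\tilde\rho$ with $\tilde\rho\in\Hol^\alpha_{2K_0}$, so that $\mathcal{L}^{n_0}(\rho_1-\rho_2)=(1-\tau)\mathcal{L}^{n_0}(\tilde\rho_1-\tilde\rho_2)$ with the $\mathcal{L}^{n_0}\tilde\rho_i$ back in $\Hol^\alpha_{K_0}$; iterating this Doeblin-type coupling gives geometric decay of $\sup\lVert\mathcal{L}^n(\rho_1-\rho_2)\rVert_\alpha$ \emph{directly in the Hölder norm}, and existence of $\rho_0$ falls out as the limit of the shrinking sets $\mathcal{L}^n(\Hol^\alpha_{K_0})$ (no Schauder needed). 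You instead invoke Birkhoff's projective-metric theorem, which buys you a clean abstract contraction rate but only in a metric comparable to log-$C^0$, forcing the additional Lasota--Yorke interpolation step (together with the large-iterate fix you correctly anticipate for the case $C_1\lambda^\alpha\ge1$) to recover the $\lVert\cdot\rVert_\alpha$ statement; it also requires verifying the finite-diameter estimate for $\mathcal{R}_{K_*}$ inside $\mathcal{R}_{K_0}$, which is standard but not free. Both routes work; note however that the paper's choice is deliberate, since the mass-splitting coupling is precisely the mechanism (item \ref{enumi:tech4} of Proposition \ref{p:technical}) that it then transports to the Anosov setting, where a Hilbert-metric argument would be much harder to set up.
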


After the proof, we will briefly recall why this statements contains the decay of correlation, and its relation to a very strong convergence of probability measure, in the total variation norm. This convergence will inspire the metric to be used on measures in the Anosov case.

Our proof starts in a usual way, showing that the ``stretching'' effect stemming from the expansion hypothesis regularizes densities above some determined level. Then we phrase a simple coupling argument in term of densities: regular enough densities stay far from zero, so that any two of them ``share'' a definite amount of mass.

\begin{proof}
For each $K>0$, consider the following set of Hölder densities:
\[\Hol_K^\alpha := \Big\{\rho : \mathbb{T}^1\to(0,+\infty) \,\Big|\, \int\rho \dd\leb = 1, \frac{\rho(x)}{\rho(y)} \le e^{K d(x,y)^\alpha}\ (\forall x,y\in\mathbb{T}^1)\Big\}.\]
For all $\rho\in\Hol^\alpha_K$ and all $x,y\in\mathbb{T}^1$ with (suitably ordered) inverse images $\{x_i\}_i$ and $\{y_i\}_i$: 
\begin{align*}
\frac{\rho}{\lvert T'\rvert}(x) &\le \frac{\rho}{\lvert T'\rvert}(y) \cdot  e^{(K+H)d(x,y)} \\
\frac{\rho}{\lvert T'\rvert} (x_i) &\le \frac{\rho}{\lvert T'\rvert}(y_i)  \cdot e^{(K+H)d(x_i,y_i)^\alpha} \\
\mathcal{L}\rho(x) &\le \mathcal{L}\rho(y) \cdot e^{(K+H)\lambda^\alpha d(x,y)^\alpha}
\end{align*}
so that for all $K>0$, $\mathcal{L}(\Hol^\alpha_K) \subset \Hol^\alpha_{h(K)}$ where $h(K)=(K+H)\lambda^\alpha$. For all $K$ and $n\in\mathbb{N}$:
\begin{align*}
h^n(K) &= K\lambda^{n\alpha} + \lambda^\alpha H + \lambda^{2\alpha} H + \dots +\lambda^{n\alpha} H \\
 &\le K\lambda^{n\alpha} + \frac{\lambda^\alpha}{1-\lambda^\alpha}H.
\end{align*}
Set $K_0=2\frac{\lambda^\alpha}{1-\lambda^\alpha} H$ and let $n_0$ be the smallest integer such that $2\lambda^{n_0\alpha} \le \frac{\lambda^\alpha}{1-\lambda^\alpha}$, to get:
\begin{equation}
\mathcal{L}^{n_0}(\Hol^\alpha_{2K_0}) \subset \Hol^\alpha_{K_0} \qquad\text{and}\qquad 
\mathcal{L}(\Hol^\alpha_{K_0}) \subset \Hol^\alpha_{K_0}
\end{equation}
%
Every $\rho\in \Hol^\alpha_K$, being a density, reaches a value at least $1$ at some point $x_0$, and each $x\in\mathbb{T}^1$ is at distance less than $1$ from $x_0$ so that $\rho(x)\ge e^{-Kd(x,x_0)^\alpha} \ge e^{-K}$.
For all $\tau\in(0,e^{-K_0})$ and all $\rho\in \Hol^\alpha_{K_0}$, we can decompose
\[\rho = \tau \one + (1-\tau)\tilde \rho\]
where $\one$ is the constant function with value $1$ and $\tilde \rho = (\rho-\tau\one)/(1-\tau)$ is positive of integral $1$ with respect to $m$. From now on we fix a value of $\tau$ small enough to further ensure that $\tilde \rho\in\Hol^\alpha_{2K_0}$ whenever $\rho\in\Hol^\alpha_{K_0}$. Then for all $\rho_1,\rho_2\in \Hol^\alpha_{K_0}$ and all $n\ge n_0$:
\begin{align}
\mathcal{L}^{n_0}(\rho_1-\rho_2) 
  &= \mathcal{L}^{n_0}\big(\tau\one+(1-\tau)\tilde\rho_1-\tau\one - (1-\tau)\tilde\rho_2\big) \nonumber\\
  &= (1-\tau) \mathcal{L}^{n_0} (\tilde \rho_1 -\tilde\rho_2) \nonumber\\
\mathcal{L}^{n}(\rho_1-\rho_2)
  &= (1-\tau)\mathcal{L}^{n-n_0} \big(\mathcal{L}^{n_0} \tilde \rho_1 -\mathcal{L}^{n_0} \tilde\rho_2\big) \label{e:contraction}
\end{align}
where $\tilde\rho_i\in \Hol^\alpha_{2K_0}$ and $\mathcal{L}^{n_0} \tilde \rho_i\in \Hol^\alpha_{K_0}$. For each $n\in\mathbb{N}$, define
\[\gamma_n = \sup_{\rho_1,\rho_2\in\Hol^\alpha_{K_0}}  \lVert \mathcal{L}^n(\rho_1-\rho_2) \rVert_\alpha \in [0,+\infty);\]
 the above computation shows
$\gamma_n \le (1-\tau) \gamma_{n-n_0}$
so that $\gamma_n$ goes to $0$, exponentially fast. The sequence of sets of functions $\mathcal{L}^n(\Hol^\alpha_{K_0})$ thus converges to a point $\{\rho_0\}\subset \Hol^\alpha_{K_0}$ in the uniform norm (using compactness provided by the Azelà-Ascoli Theorem). Then $\rho_0$ must be a fixed point of $\mathcal{L}$ and $\rho_0m$ is an invariant measure of $T$. Being a fixed point, $\rho_0$ must belong to $\Hol^\alpha_{K_0/2}$, and $(\mathcal{L}^n\rho)_n$ converges to $\rho_0$ exponentially fast, uniformly over all $\rho\in \Hol^\alpha_{K_0}$. The statement follows.
\end{proof}

\begin{rema}[Decay of correlations]\label{r:decay}
Denote by $\mu_0=\rho_0 m$ the ACIP and let $f,g:\mathbb{T}^1\to\mathbb{R}$ be two ``observables'', with $f\in L^1(m)$  and $g$ $\alpha$-Hölder. For $a$ small enough and $b=1-a\lVert g\rVert_\alpha^{-1}\int g\dd\mu_0$, the function $\rho=(a\lVert g\rVert_\alpha^{-1} g+b)\rho_0$ is a $\alpha$-Hölder density with a $g$-uniform bound on $\lVert \rho-\rho_0\rVert_\alpha$, so that:
\begin{align*}
\int f\circ T^n \cdot g \dd\mu_0 &= \frac{\lVert g\rVert_\alpha}{a}\int f\circ T^n \cdot \big(\frac{\rho}{\rho_0}-b\big) \dd\mu_0 \\
  &= \frac{\lVert g\rVert_\alpha}{a}\Big(\int f \dd T_*^n(\rho m) -b\int f\dd T_*^n\mu_0\Big) \\
  &= \frac{\lVert g\rVert_\alpha}{a}\int f \big(\mathcal{L}^n\rho-\rho_0\big)\dd m +\int g\dd \mu_0 \int f\dd \mu_0 \\
  &=\int g\dd \mu_0 \int f\dd \mu_0 + O(\lVert g\rVert_\alpha \lVert f\rVert_{L^1(m)} \theta^n)
\end{align*}
i.e. we have exponential decay of correlations.
\end{rema}

\begin{rema}
As seen in Remark \ref{r:decay}, Theorem \ref{t:expanding} implies that the measures $T_*^n(\rho m)$ converge to $\mu_0:=\rho_0m$ in duality with $L^1(m)$ test functions; the convergence thus also holds against Borel bounded test functions, i.e. in the \emph{total variation} distance. This can be rephrased as the existence of positive measures $\nu_n$ of total mass increasing to $1$ such that $\nu_n\le  T_*^n(\rho m)$ and $\nu_n\le \mu_0$ for all $n$. Such a strong convergence cannot be expected in the Anosov case, where the measures we will have to work with might be singular with respect to the limit measure. Seeing an expanding map as an Anosov endomorphism with stable dimension $0$ gives a good guess on the natural replacement for the total variation distance: we shall use an optimal transportation distance where mass is only allowed to move along the stable direction. This is the second strand of proof we shall weave with the first one.
\end{rema}

\section{Anosov diffeomorphisms}

\subsection{Coupling measures and deduction of Theorem \ref{t:central}}

For each $\beta\in(0,\alpha_0]$, we shall build a nested family of sets of probability measures $(\mathscr{R}_K^\beta)_K>0$, where $K$ is a regularity bound (the lesser, the more regular), where elements of $\mathscr{R}_K^\beta$ have absolutely continuous local disintegration with respect to some foliation close to the unstable foliation of $T$, and $K$ bounds the Hölder regularity both of the foliation and of the densities on the leafs. For all $L>0$ we define 
\[\Delta_s(L) := \big\{(x,y)\in M\times M \mid d_s(x,y)\le L \big\} \]
the set of pairs of points at distance at most $L$ along the stable foliation.  The slightly technical part of the article (detailed in Section \ref{s:technical}) leads to the following properties:
\begin{prop}\label{p:technical}
There exist $L_0$ and $\tau\in(0,1)$ (depending only on $T$), $K_0>0$ and $n_0\in\mathbb{N}$  (both further depending on $\beta$) such that:
\begin{enumerate}
\item The SRB measure of $T$ lies in $\mathscr{R}_{K}^\beta$ for some $K>0$,
\item $\mathscr{R}_K^\beta\subset\mathscr{R}_{K'}^\beta$ for all $K'>K$,
\item\label{enumi:tech2.5} for all $\mu\in \mathscr{R}_K^\beta$ associated with a foliation $W$ and all positive $\rho\in\holc_W^\beta$ such that $\int \rho\dd\mu=1$, the probability measure  $\rho\mu$ lies in $\mathscr{R}_{K'}^\beta$ where $K'=K + \holc_W^\beta(\log \rho)$,
\item $T_*(\mathscr{R}_{K_0}^\beta)\subset \mathscr{R}_{K_0}^\beta$ and $T^{n_0}_*(\mathscr{R}_{2K}^\beta)\subset \mathscr{R}_{K}^\beta$ for all $K\ge K_0$,
\item\label{enumi:tech4} for all $\mu_1,\mu_2\in \mathscr{R}_{K_0}^\beta$ we can find a probability measure $\eta$ concentrated on $\Delta_s(L_0)$ and two probability measures $\mu'_1,\mu'_2\in \mathscr{R}_{2K_0}$ such that
\[\mu_i=\tau p_{i*}\eta + (1-\tau) \mu'_i \qquad (i\in\{1,2\}).\]
\end{enumerate}
Moreover, for any $\beta\in(0,\alpha_0)$ we can make the map $T\mapsto K_0$ upper semi-continuous in the $C^1$ topology, in particular there exist $\varepsilon>0$ such that to any map $T_1$ that is $\varepsilon$-close to $T$ in the $C^1$ topology we can apply the above with the same $\beta$ and a constant $K_1\le 2K_0$ in the role of $K_0$ (and further constants $L_1, n_1$).
\end{prop}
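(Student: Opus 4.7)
The five assertions have very different difficulties and I would dispatch them in a non-linear order. Items (ii) and (iii) are essentially tautological consequences of the definition of $\mathscr{R}_K^\beta$: (ii) is a relaxation of the two Hölder bounds that define the class, and (iii) simply records that multiplying a measure by $\rho\in\holc_W^\beta$ leaves the underlying foliation $W$ unchanged and adds $\holc_W^\beta(\log\rho)$ to the log-density Hölder constant.

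For items (i) and (iv) I would follow the two-track strategy of Section \ref{s:expanding}. The action of $T$ on foliations tangent to the cone field contracts the $\beta$-Hölder constant of the leaf graphs: a graph-transform computation should give an affine iteration $K\mapsto\lambda_0^\beta K+C_0$, where $\lambda_0\in(0,1)$ comes from the contraction induced on the Grassmannian sections $U_x$ and $C_0$ is controlled by the Hölder constant of $DT$; the unique fixed point is realised by the unstable foliation $W^u$. In parallel, pushing a leafwise density forward by $T$ multiplies it by the reciprocal of the unstable Jacobian $\det DT|_{E^u}$, whose log is $\beta$-Hölder along unstable plaques with a uniform constant, yielding a companion affine recursion on the density Hölder bound. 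Choosing $K_0$ strictly above both fixed points and $n_0$ so that $n_0$ iterations compress $[0,2K_0]$ into $[0,K_0]$ proves (iv). For (i), an Arzelà--Ascoli compactness argument on the nested sequence $T_*^n(\mathscr{R}_{K_0}^\beta)$, whose elements have equi-controlled regularity, produces a $T$-invariant measure in $\mathscr{R}_{K_0}^\beta$; uniqueness of the SRB measure identifies it as the desired element $\mu_0$.

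The main obstacle is (v), the Anosov analogue of the simple split $\rho=\tau\one+(1-\tau)\tilde\rho$ from the warm-up. My approach is to exploit local product structure. Cover $M$ by finitely many charts $B_j$ of diameter at most $L_0$ admitting a local product decomposition $B_j\simeq U_j\times S_j$, with $U_j$ modelled on an unstable plaque and $S_j$ on a stable one, chosen small enough that every admissible foliation $W$ appears in $B_j$ as a family of graphs of $\beta$-Hölder maps $U_j\to S_j$. The regularity bound $K_0$ forces the density of any $\mu\in\mathscr{R}_{K_0}^\beta$ against the product reference measure $du\,d\nu(s)$ to be bounded above and below by positive constants depending only on $K_0$. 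The stable holonomy between a $W_1$-plaque and a $W_2$-plaque is uniformly bi-Lipschitz over $\mathscr{R}_{K_0}^\beta$, so inside each $B_j$ one can find $\tau_j>0$ and a measure $\eta_j$ concentrated on $\Delta_s(L_0)$ with $\tau_j\,p_{i*}\eta_j\le\mu_i|_{B_j}$ for $i=1,2$. Patching the $\eta_j$ by a Hölder partition of unity and defining $\mu_i'$ as the normalised residuals yields the desired decomposition; the delicate point, which I expect to be the real source of the authors' "sketchy" disclaimer, is to check that $\mu_i'\in\mathscr{R}_{2K_0}^\beta$. This uses (iii) together with the fact that the partition-of-unity cutoffs can be chosen with controlled Hölder norm, so the residuals only suffer a bounded deterioration of their leafwise log-density constant.

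The upper semicontinuity statement in $C^1$ topology is then a bookkeeping exercise: $\lambda_0$, $C_0$, the Hölder constant of $DT|_{E^u}$, the admissible chart size and the two-sided density bound in each chart all vary upper semicontinuously with $T$ in the $C^1$ topology, hence so does $K_0$, and the perturbed constants $L_1$, $n_1$ are obtained by the same construction applied to $T_1$.
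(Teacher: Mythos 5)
Items (i)--(iv) and the semicontinuity bookkeeping match the paper's proof in substance: the affine recursion $K\mapsto\lambda'(K+H)$ for the push-forward, the fixed-point choice of $K_0$ and $n_0$, and the direct verification of (ii)--(iii) are exactly what the paper does (the paper gets (i) by citing the standard infinite-product formula for the SRB leaf densities rather than by compactness, but your route is also viable). The problem is item (v), where your argument has a genuine gap.

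You couple $\mu_1$ and $\mu_2$ chart by chart, inside a common product box $B_j$, and you justify this by claiming that the bound $K_0$ forces the density of any $\mu\in\mathscr{R}_{K_0}^\beta$ against a product reference measure to be bounded above and below. This is false in the transverse direction: the definition of $\mathscr{R}_{K_0}^\beta$ only constrains the conditional densities \emph{along} the leaves of $W$; the transverse factor $\eta_i$ of the disintegration is an arbitrary positive measure, possibly atomic, possibly supported anywhere. Consequently $\mu_1$ may put all its mass in one chart and $\mu_2$ all of its mass in a disjoint chart, in which case no $B_j$ carries a definite fraction of both measures and your local couplings sum to total mass $0$ rather than to a universal $\tau>0$. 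This is precisely why the paper's $L_0$ is not a chart diameter: the proof first uses a pigeonhole to find two possibly \emph{different} charts $\varphi_1,\varphi_2$ with $\mu_i(\mathrm{Core}_{\varphi_i})\ge 1/N$, and then invokes topological mixing (density of stable leaves) to connect $x_{\varphi_1}$ to $x_{\varphi_2}$ by a stable path of length at most $\tilde L_0$, transporting a tent-function's worth of leaf mass from a $W^1$-plaque in the first chart to a $W^2$-plaque in the second chart via the stable holonomy along that long path. The global constant $L_0=\tilde L_0+2\delta_+$ and the Hölder continuity of stable holonomies over bounded stable distance are what make the subtracted piece stay log-Hölder with constant $2K_0$. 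Without this global step your construction cannot produce a $\tau$ uniform over all pairs in $\mathscr{R}_{K_0}^\beta$, which is the whole content of item (v).
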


While item \ref{enumi:tech2.5} might seem difficult to apply, since $W$ might not be known and $\holc_W^\beta$ is thus quite abstract a space, it can be applied e.g. to a $\beta$-Hölder function $\rho$, and in the case of the SRB measure when $W=W_u$.

\begin{proof}[Proof of Theorem \ref{t:central}.]
We first prove that $\wass_s^\beta$ is uniformly bounded over all $\mu_1,\mu_2\in\mathscr{R}_{K_0}^\beta$, and that the SRB measure is the unique $T$-invariant measure in $\mathscr{R}_K^\beta$. For now $\beta$ is fixed anywhere in $(0,\alpha_0]$, and we will see along the proof how we need to further restrict it.

We will apply item \ref{enumi:tech4} of Proposition \ref{p:technical} repeatedly, defining sequences $(\mu_1^k,\mu_2^k)_{k\ge0}$ and $(\eta^k)_{k\ge 0}$ as follows. First, $\mu_i^0=\mu_i\in \mathscr{R}_{K_0}^\beta$ are the measures we start with. Given $\mu_1^k,\mu_2^k$, we apply to them Proposition \ref{p:technical} to get a ``partial coupling'' $\eta^k$ and ``residual measures'' $(\mu_i^k)'\in \mathscr{R}_{2K_0}$. Then we set
\[\mu_i^{k+1} = T^{n_0}_*(\mu_i^k)'\in \mathscr{R}_{K_0}.\]
By induction, for both $i$ and all $k$:
\[T^{kn_0}_*\mu_i = \sum_{j=0}^k  (1-\tau)^j \tau \big[T^{(k-j)n_0}\circ p_{i}\big]_* \eta_j + (1-\tau)^{k+1}(\mu_i^k)' \]
Applying $T^{-kn_0}_*$ to this equality and letting $k\to\infty$, it follows that 
\[\gamma := \tau \sum_{j\ge0}(1-\tau)^j (T^{-jn_0},T^{-jn_0})_*\eta_j\]
is a coupling of $(\mu_1,\mu_2)$. Since $\eta_j$ is concentrated on $\Delta_s(L_0)$, letting $\lambda_0 = \inf\{\lVert D_xT u\rVert\colon x\in M, u\in E^s_x, \lVert u \rVert=1\}$ be the maximal contraction of $T$ on the stable direction, 
\begin{align*}
\int d_s(x,y)^\beta \dd \gamma(x,y) 
  &= \tau \sum_{j\ge 0} (1-\tau)^j \int d_s(T^{-jn_0} x, T^{-jn_0} y)^\beta \dd\eta_j \\
  &\le \tau \sum_{j\ge 0} (1-\tau)^j \int \lambda_0^{-j\beta n_0} d_s( x, y)^\beta \dd\eta_j \\
  &\le \tau L_0^\beta \sum_{j\ge 0} \big(\frac{1-\tau}{\lambda_0^{\beta n_0}}\big)^j
\end{align*}
Set $\beta_0=\min\big\{\alpha_0,\frac{\log(1-\tau)}{n_0\log\lambda_0}\big\}$; then whenever $\beta\in(0,\beta_0)$, the above series is convergent and provides the desired uniform bound.

Last, by Proposition \ref{p:technical} the SRB measure $\mu_0$ lies in some
$\mathscr{R}_{K}^\beta$. It must be that $\mu_0\in \mathscr{R}_{K_0}^\beta$: if $K\le K_0$ this is direct, otherwise it suffices to apply $T$ sufficiently many times and use invariance of $\mu_0$. Uniqueness of the $T$-invariant measure in this set then follows from \eqref{e:contraction}, which only depends on the part of Theorem \ref{t:central} we already proved.
\end{proof}

\subsection{Proof of Corollaries \ref{c:decay} and \ref{c:stability}}

Let $\mu_0$ be the SRB measure of $T$ and consider any fixed $\beta\in(0,\beta_0)$. By Proposition \ref{p:technical}, $\mu_0\in\mathscr{R}_{K_0}^\beta$.
\begin{proof}[Proof of Corollary \ref{c:decay}]
Let $f\in\holc_s^\beta$ and $g\in\holc_u^\beta$. Much like in Remark \ref{r:decay}, we take $a$ small enough (but independent of $g$) and $b=1-a\lVert g\rVert_{u,\beta}^{-1} \int g \dd\mu_0$ ensuring that $\rho:= a\lVert g\rVert_{u,\beta}^{-1} g+b$ is positive, that $\int \rho\dd\mu_0=1$, and that $\rho\mu_0\in \mathscr{R}_{2K_0}^\beta$ (item \ref{enumi:tech2.5} of Proposition \ref{p:technical}). Then
\begin{align*}
\int f\circ T^n \cdot g \dd\mu_0 &= \frac{\lVert g\rVert_\alpha}{a}\int f\circ T^n \cdot (\rho-b) \dd\mu_0 \\
  &= \frac{\lVert g\rVert_\alpha}{a}\Big(\int f \dd T_*^n(\rho \mu_0) -b\int f\dd T_*^n\mu_0\Big) \\
  &= \frac{\lVert g\rVert_\alpha}{a}\Big(\int f \dd T_*^n(\rho \mu_0)- \int f\dd\mu_0\Big) +\int g\dd \mu_0 \int f\dd \mu_0 \\
  &=\int g\dd \mu_0 \int f\dd \mu_0 + O\Big(\lVert g\rVert_{u,\beta} \lVert f\rVert_{s,\beta} \ \wass_s^\beta\big(T_*^n(\rho \mu_0),\mu_0\big)\Big)
\end{align*}
Using Theorem \ref{t:central} and its consequence \eqref{eq:decay}, since $T_*^{n_0}(\rho \mu_0)\in\mathscr{R}_{K_0}^\beta$, for all $n\in\mathbb{N}$:
\begin{align*}
\wass_s^\beta(T_*^{n}(\rho \mu_0),\mu_0) &= \wass_s^\beta(T_*^{n-n_0}(T_*^{n_0}\rho \mu_0)), T_*^{n-n_0}\mu_0)\le C\lambda^{\beta(n-n_0)}.
\end{align*}
\end{proof}

\begin{proof}[Proof of Corollary \ref{c:stability}]
We shall denote with an index $1$ instead of $0$ the quantities related to $T_1$ instead of $T$ (e.g. $\mu_1$, $\alpha_1$, $K_1$, etc.) with implied dependency on $T_1$, $\beta$, etc.

Fix any $\beta<\beta_0$; taking $\varepsilon$ small enough ensures that the unstable distribution $E^u_1$ of $T_1$ is tangent to the cone field fixed around $E^u$ to define the $(\mathscr{R}^\beta_K)_K$, and that $\alpha_1>\beta$. By Proposition \ref{p:technical} we can further assume that $K_1\le 2K_0$. Observe that whenever Proposition \ref{p:technical} holds for some value of $K_0$, it also hold for all larger values. Up to enlarging $K_0$, we can thus assume that $K_1\le K_0$ and that the unstable foliation of $T_1$ is $\beta$-Hölder of constant $K_0$. This enlargement really depends on $\varepsilon$ only, uniformly on all $T_1$. We thus have $\mu_1\in \mathscr{R}_{K_0}^\beta$.

%

Normalize the Riemannian metric to ensure the diameter of $M$ is $\le 1$.
Let $A=\lVert T\rVert_{C^1}+\varepsilon$, so that both $T$ and $T_1$ are $A$-Lipschitz.
Set $D= \wass_1(\mu_0,\mu_1)$, and fix $n\in\mathbb{N}$ large enough to ensure that $\wass_\beta^s(T^n_*\mu_1,T^n_*\mu_0)\le \frac12 \wass_1(\mu_0,\mu_1)$; we can take $n\simeq \log 1/D$, so that $A^n \simeq D^{-p}$ for some $p>0$ independent of $T_1$.
Then
\begin{align*}
D = \wass_1(\mu_1,\mu_0) &\le \wass_1(\mu_1,T^n_*\mu_1) + \wass_1(T^n_*\mu_1,\mu_0) \\
  &= \wass_1(T_{1*}^n \mu_1,T^n_*\mu_1) + \wass_1(T^n_*\mu_1,T^n_*\mu_0) \\
  &\le \lVert T_1^n-T^n\rVert_{C^0} + \wass_\beta^s(T^n_*\mu_1,T^n_*\mu_0)
\end{align*}
We bound $\lVert T_1^n-T^n\rVert_{C^0}$ by induction: for all $x\in M$,
\begin{align*}
d(T_1^n x, T^nx) &\le d\big(T_1 (T_1^{n-1}x), T_1(T^{n-1} x)\big) + d\big(T_1(T^{n-1}x), T(T^{n-1} x)\big) \\
  &\le A\, d(T_1^{n-1} x, T^{n-1}x) + \lVert T_1-T\rVert_{C_0} \\
  & \le (1+A+\dots +A^{n-1}) \lVert T_1-T\rVert_{C_0} \\
  &\lesssim A^n\lVert T_1-T\rVert_{C_0}
\end{align*}
Plugging this into the previous computation, we get
\begin{align*}
D  &\lesssim A^n \lVert T_1-T\rVert_{C^0} +\frac{D}{2} \\
\frac{D}{2} &\lesssim D^{-p} \lVert T_1-T\rVert_{C^0} \\
D &\lesssim \lVert T_1-T\rVert_{C^0}^{\frac{1}{1+p}}.
\end{align*}
\end{proof}

\subsection{Technical part of the proof: $(\mathscr{R}^\beta_K)_{K}$ and its properties}\label{s:technical}

We start be recalling classical facts on Anosov diffeomorphisms and setting up some notations.

Recall that we assume $T:M\to M$ to be a $C^{1+\alpha}$, topologically mixing Anosov diffeomorphism. We denote by $E^s,E^u$ the \emph{stable} and \emph{unstable} distributions, which are $T$-invariant, of constant dimensions $k_s$, $k_u$, and such that $E_x^s\oplus E_x^u = T_xM$ for all $x\in M$. By choosing an adapted riemannian metric on $M$, we can assume that there exist $\lambda\in(0,1)$ such that for all $x\in M$, $u\in E_x^s$, $v\in E_x^u$ and $n\in\mathbb{N}$:
    \[ \lVert D_x(T^n)(u) \rVert \le \lambda^n \lVert u \rVert, \quad
        \lVert D_x(T^{-n})(v) \rVert \le \lambda^n \lVert v \rVert. \]

The stable and unstable distribution can be integrated into the \emph{stable} and \emph{unstable foliations} $(W^s_x)_x$, $(W^u_x)_x$.

The Riemannian metric on $M$ induces a canonical Riemannian metric on each Grassmanian $G(k,T_xM)$ (the space of all $k$-dimensional subspaces of $T_xM$), we denote the induced distance function by $d_{G(k,T_xM)}$. We can in particular define for all continuous $k$-dimensional distributions $E,F$: 
\[d(E,F) := \max_{x\in M} d_{G(k,T_xM)} (E_x,F_x);\]
the Anosov property in particular implies that $T_*$ acts on $G(k_u,T_xM)$ with $E^u$ as an exponentially attracting fixed point: for some $\eta\in (0,1)$ and some $\varepsilon>0$, for all $x\in M$ and all $E_x\in G(k_u,T_xM)$ such that $d_{G(k_u,T_xM)}(E_x,E^u_x)\le \varepsilon$,
\[ d_{G(k_u,T_{T(x)}M)}(D_xT(E_x),E^u_{T(x)})\le \eta\cdot  d_{G(k_u,T_xM)}(E_x,E^u_x). \]
We fix a field of open cones $(C_x)_{x\in M}$ in the tangent bundle $TM$ containing $E_u$, strongly preserved by $T$ (meaning $\overline{T(C_x)}\subset C_{T(x)}$ for all $x\in M$) and in such a way that the above contraction holds on each $U_x := \{E\in G(k_u,T_xM) \mid E\subset C_x\}$.

To compare supspaces of $T_xM$ at different nearby base points $x$, we simply fix a finite smooth atlas of $M$; we say that a $k$-dimension distribution $E$ is $\beta$-Hölder if for some constant $K$, in all charts in the fixed atlas, $E$ is $\beta$-Hölder with that constant $K$. The exponent $\beta$ does not depend on the choice of atlas, but the constant does.

There exist some $\alpha_0\in(0,\alpha)$ such that:
\begin{itemize}
\item $E^s,E^u$ are $\alpha_0$-Hölder continuous, and in particular the leafs of $W^s$, $W^u$ are $C^{1+\alpha_0}$ submanifolds;
\item the holonomies $\pi_s^{N_1\to N_2}$ and $\pi_u^{N_1\to N_2}$ of $W^s$, $W^u$ between  $C^{1,\alpha_0}$ transversals  $N_1$, $N_2$ to $E^s$ (or $E^u$) are $\alpha_0$-Hölder and absolutely continuous with $\alpha_0$-Hölder jacobian; moreover the $\alpha_0$-Hölder constants can be chosen uniform over all $(N_1,N_2)$ that are bounded in diameter, are written as graphs of bounded $C^{1,\alpha_0}$ norm, and are at bounded distance one from the other along $W^s$ (or $W^u$).
\end{itemize}
While $\alpha_0$ depends on $T$, it does so only through a few parameters ($\alpha$, the $\alpha$-Hölder constants of $E^s$ and $E^u$, $\lambda$, a uniform bound on $DT$, etc.) and it can be chosen such that every $C^{1+\alpha}$ maps $T_1$ that is sufficiently close to $T$ in the $C^1$ topology shares the same $\alpha_0$.

From now on we fix any $\beta\in(0,\alpha_0)$.

\subsubsection{Définition of $(\mathscr{R}^\beta_K)_{K}$}\label{s:defiR}

While elements of $(\mathscr{R}^\beta_K)_{K}$ are probability measures, their crucial defining property is a relation with some foliation, which we thus have to define beforehand. We will need to define a regularity constant to control certain submanifolds; to this end we will again use a specified atlas, but with some additionnal features.

We consider smooth charts $\varphi:U_\varphi\subset M \mapsto V_\varphi\subset \mathbb{R}^k$ (where $k$ is the dimension of $M$) whose range $V_\varphi$ are of the form $B^{k_u}(0,1)\times B^{k_s}(0,1)\subset \mathbb{R}^k$, where $B^\bullet(p,r)$ denotes the ball of center $p$ and radius $r$ in a factor $\mathbb{R}^\bullet$ of $\mathbb{R}^k$, with respect to the canonical Euclidean metric. A typical point of $V$ will be denoted by $(p,q)$, with $p\in B^{k_u}(0,1), q\in  B^{k_s}(0,1)$. We denote by $x_\varphi := \varphi^{-1}(0,0)$ the ``center'' of the chart, and we request that the ``slices'' $\varphi^{-1}(B^{k_u}(0,1)\times \{q\})$ are $C^1$-close to $W^u_{\varphi^{-1}(0,q)}$, in particular their tangent space shall be contained in the cone field $C$ at all point; and similarly the $\varphi^{-1}(\{p\}\times B^{k_s}(0,1))$ shall be $C^1$-close to $W^s_{\varphi^{-1}(p,0)}$, in particular their tangent space shall avoid the closure $\bar C=(\bar C_x)_x)$ of the cone field at all point. We can further assume (taking the domains $U_\varphi$ somewhat tall in the stable direction and thin in the unstable direction) that the image $\varphi_*C$ of the cone field is very narrow; in particular that  every $k_u$-dimensional $C^1$ submanifold $N$ whose tangent space is everywhere contained in $C$ and that meets $\mathrm{Core}_\varphi := \varphi^{-1}(B^{k_u}(0,1/2)\times B^{k_s}(0,1/2))$ at some point $x$, can be written locally around $x$ as $\varphi^{-1}(G)$ where $G$ is a graph of a map $B^{k_u}(0,1/2)\to B^{k_s}(0,1)$. In particular, every foliation whose tangent distribution is everywhere contained in $C$ admits a local trivialization that cover the core of $\varphi$.

From now on we fix a finite atlas of such charts, where the $\mathrm{Core}_\varphi$ cover $M$.

\begin{defi}
Let us say that a $k_u$-dimensional foliation $W$ of $M$ is $K$-\emph{adapated} (to $T,C,\beta$) when the leafs of $W$ are $C^{1+\beta}$ and their tangent space is inside $c$ at all point, and in each chart of the specified atlas the leafs are graphs of $\beta$-Hölder with constant at most $K$. 
\end{defi}

\begin{defi}
Let $\mathscr{R}_K^\beta$ be the set of probability measure $\mu$ on $M$ such that there exist a $K$-adapted foliation $W$ with respect to which $\mu$ has absolutely continuous local disintegrations with positive $\beta$-Hölder densities, and each of those densities $\rho$ satisfy 
\begin{equation}
\frac{\rho(x)}{\rho(y)} \le e^{K d_W(x,y)^\beta}
\label{eq:log-holder}
\end{equation}
where $d_W$ denotes the distance along the leafs of $W$. We say that $K$ is a \emph{log constant} of \emph{log $\beta$-Hölder constant} for $\rho$, or for $\mu$. 
\end{defi}

Assume $U_1,U_2$ are two intersecting open sets on which $W$ can be trivialized and $N_1,N_2$ are transversals; i.e. $U_i = \cup_{y\in N_i} W_y\cap U_i \simeq N_i\times B^{k_u}(0,1)$ and consider  $x\in U_1\cap U_2$. Then there are two different densities $\rho_1$, $\rho_2$ corresponding to the disintegrations of the restrictions of $\mu$ to $U_1$, $U_2$ with respect to $N_1$ and $N_2$; but this densities are proportional to each other along leaves where they are both defined. Indeed, denoting by $p_i:U_i\to N_i$ the holonomic projection to the transversal, $\rho_1(x)/\rho_2(x)$ is the Radon derivative of $\pi^{N_1\to N_2}_*p_{1*}\mu$ with respect to $p_{2*}\mu$ at $y=p_2(x)$. In particular, equation \eqref{eq:log-holder} holds for both $\rho_i$ simultaneously, with the same constant.

\subsubsection{Proof of Proposition \ref{p:technical}}

We consider $\beta\in(0,\alpha_0)$ and the above specified atlas fixed.
For each $r>0$ and $x\in M$, set 
\[B_\times(x,r) = \{y\in M \mid \exists z\in M, d_s(x,z)<r, d_u(z,y)<r\}.\]
This ``product balls'' form a basis of the topology of $M$, and we can
find $\delta_-,\delta_+>0$ such that for every chart $\varphi$, 
$B_\times(x_\varphi,\delta_-)\subset \mathrm{Core}_\varphi\subset U_\varphi \subset B_\times(x_\varphi,\delta_+)$. Up to refining the atlas, we can moreover ensure that $\delta_+$ is small enough that both the stable and unstable foliations of $T$ are topologically trivial at scale $\delta_+$ (i.e. the $B_\times(x,\delta_+)$ are domains of trivailizing charts). Since $T$ is assumed to be topologically mixing, its stable leafs are dense and we can find $\tilde L_0$ such that for every pair of charts $\varphi,\psi$, their is a stable path of length at most $\tilde L_0$ from $\varphi^{-1}(0,0)$ to a point $\delta_-/100$-close to $x_\psi$ in the distance $d_u$. We set $L_0=\tilde L_0+2\delta_+$.

To see that the SRB measure $\mu_0$ lies in some $\mathscr{R}_{K}^\beta$, it suffices to use the usual expression for the densities of its local disintegrations as an infinite product involving the unstable jacobian of $T$, see section 9.3 in \cite{Barreira2007book}.

By definition, we immediately get $\mathscr{R}_K^\beta\subset\mathscr{R}_{K'}^\beta$ whenever $K'>K$. Moreover if $\mu\in\mathscr{R}_K^\beta$ and $\rho\in \holc_W^\beta$ is a positive density with respect to $\mu$, then $\rho\mu$ has absolutely continuous disintegrations with respect to the same $K$-adapted foliation $W$ as $\mu$; if $\rho_1$ is a local leaf density of $\mu$, then the local leaf density of $\rho\mu$ is proportional to $\rho\rho_1$, hence $\rho\mu\in \mathscr{R}_{K'}^\beta$ for $K'=K +\lVert\log \rho\rVert_{W,\beta}$.

As in Section \ref{s:expanding}, there is $H>0$ and $\lambda'\in (0,1)$ such that whenever $\mu\in \mathscr{R}_K^\beta$, $T_*\mu\in \mathscr{R}_{\lambda'(K+H)}^\beta$; the constant $\lambda'$ is $\beta$th power of the contraction factor of $T$ on every submanifold whose tangent space is contained in $C$ at all point, and $H$ accounts for the unstable jacobian of $T$ and the way $T$ and change of charts distort $\beta$-Hölder graphs. We can thus define $K_0$ and $n_0$ by
\[K_0 = 2\frac{\lambda'}{1-\lambda'}H, \qquad 2\lambda'^{n_0} \le \frac{\lambda'}{1-\lambda'}\]
to obtain that  $T_*(\mathscr{R}_{K_0}^\beta)\subset \mathscr{R}_{K_0}^\beta$ and $T^{n_0}_*(\mathscr{R}_{2K}^\beta)\subset \mathscr{R}_{K}^\beta$ for all $K\ge K_0$.

We now prove the core coupling property, item \ref{enumi:tech4}. Let $\mu_1,\mu_2\in \mathscr{R}_{K_0}^\beta$ and denote by $W^1,W^2$ their $K_0$-adapted foliations. Denote by $N$ the number of charts in the chosen atlas and recall that the $\mathrm{Core}_\varphi:=\varphi^{-1}(B^{k_u}(0,1/2)\times B^{k_s}(0,1/2))$ cover $M$. We can thus find two charts $\varphi_1,\varphi_2$ such that $\mu_i(\mathrm{Core}_{\varphi_i})\ge 1/N$ for both $i$; and since the cores are covered by trivializing charts of the $W^i$, we can locally disintegrate restriction $\tilde\mu_i$ of the $\mu_i$ through the holonomic projections $p_i$ to local stable leaves $D_i\subset W^s_{x_{\varphi_i}}$:
\[\mu_i = \int_{D_i} \rho_{i,x}(y) \dd\mathrm{vol}_{W^i_x}(y) \dd\eta_i(x)\] 
where $\eta_i$ are positive measures on $D_i$ of mass at least $1/N$ and $\rho_i$ are $\beta$-Hölder with log constant at most $K_0$. Consider a $1/N$-partial coupling $\tilde\gamma$  of $(\eta_1,\eta_2)$, i.e. $\gamma$ is a positive measure of mass $1/N$ on $M\times M$ with marginals $\le \eta_i$; for concreteness, let us take $\tilde\gamma=\frac1N \eta_1\otimes \eta_2$. For each pair of leafs $W^1_x, W^2_y$ we construct a partial coupling of the $\rho_i \dd\mathrm{vol}_{W^i_x}$ as follows. Given $h>0$, Let $f=f_{x,h}:W^1_x\to \mathbb{R}_+$ be defined by $f(z) = \max(0,h-\frac{10 h}{\delta_-}d_{W^1_x}(x,z))$. Let $g=g_{y,h}$ be defined as the composition of $f$ with the holonomy $\pi^s_{x,y}$ of $W^s$, along a curve of length at most $L_0$.   By choosing $h\le h_0$ with $h_0$ small enough, we can ensure that both $\rho_{1,x}-f_{x,h}$ and $\rho_{1,x}-g_{y,h}$ are positive and $\beta$-Hölder with log constant at most $2K_0$ (indeed, $f$ is Lipschitz with constant only depending on $\delta_-$, hence on $T$, and $g$ is $\alpha_0$-Hölder with constant depending further on $L_0$ and the stable holonomy Hölder constant, thus still only depending on $T$). Now all considered submanifold in the foliations are $C^{1,\beta}$ with bounded Hölder constant in the specified charts, so that there exist $\tilde\tau>0$ such that the function $f_{x,h_0}$ has integral at least $\tilde\tau$ with respect to $\mathrm{vol}_{W^1_x}$. By choosing $h=h_{x}\le h_0$ depending on $x$, we can make $f_{x,h}$ of integral exactly $\tilde\tau$. finally, our partial coupling of $\mu_1,\mu_2$ is given by
\[\gamma = \int (\mathrm{Id},\pi^s_{x,y})_* \big(f_{x,h_{x}} \mathrm{vol}_{W^1_x}\big) \dd\tilde\gamma(x,y).\]
It has mass $\tau :=\tilde\tau/N$, and its marginals are $\le\mu_i$; moreover $\mu'_i := \mu_i-p_{i*}\gamma$ have local disintegrations with respect to $W^i$ that are $\beta$-Hölder with log constant at most $2K_0$, hence $\mu'_i\in\mathscr{R}_{2K_0}^\beta$.

\subsubsection{Final remarks}

A similar method can be used to handle hyperbolic attractors instead of Anosov maps, but Theorem \ref{t:central} does not hold anymore in this setting (stable leafs are no longer dense, and most measures to be considered will no longer be at finite $\wass_s^\beta$ distance, for any $\beta$). What can be done is to pair iterates of measures $T^{n_1}_*\mu_1, T^{n_1}_*\mu_2$ where $n_1$ is chosen to ensure that a significant part of the mass of $\mu_1$, $\mu_2$ are at small stable distance one from the other.

If we relax uniform hyperbolicity, we can hope to keep weaker results in the same vein. In the case of lack of uniform hyperbolicity in the stable direction, it might be necessary to use $\omega\circ d_s$ instead of $d_s^\beta$, where $\omega$ is a concave function increasing quicker than any Hölder function at $0$. In the case of lack of uniform hyperbolicity in the unstable direction, it might be necessary to use densities and observables with an adapted regularity. In both cases, weaker speed for the decay of correlation is of course expected.

It might be possible to adapt the present method to maps of flows with discontinuities, given it relies on a relatively flexible coupling argument: pair together along the stable direction part of any two measures that are regular enough in the unstable direction, then apply the dynamics long enough to recover the initial regularity for the remaining, uncoupled parts. Rince and repeat.

\bibliographystyle{smfalpha}
\bibliography{SRB}
\end{document}